\documentclass[11pt]{amsart}
 \usepackage{amsmath,amssymb}
 \usepackage{xcolor}
 \usepackage{hyperref}
\usepackage[scr=boondoxo,scrscaled=1.02]{mathalfa}
\newcommand{\cl}[1]{\mathscr{#1}}

\newtheorem{theorem}{Theorem}[section]

\newtheorem{lemma}[theorem]{Lemma}

\newtheorem{prop}[theorem]{Proposition}

\theoremstyle{definition}

\newtheorem{remark}[theorem]{Remark}
\def\FINEDIM{\par\penalty10000\hfill$\Box$
\hphantom{mm}
\medskip
	
}
\overfullrule1em

\theoremstyle{plain}
\newtheorem*{namedthm}{\namedthmname}
\newcounter{namedthm}

\makeatletter

\def\P{{\rm P}}

 \newcommand{\R}{\mathbb R}
 
 \newcommand{\C}{\mathbb C}

 \parindent1.3em
\newdimen\tinskip
\tinskip=\parindent
\newcommand{\tin}[1]{\par\noindent\hglue\tinskip\hphantom{a)\
}\llap{#1\enspace}\ignorespaces}

\let\dlines=\displaylines

\numberwithin{equation}{section}

\usepackage{hyperref}
\hypersetup{
    bookmarks=true,         
    unicode=false,          
    pdftoolbar=true,        
    pdfmenubar=true,        
    pdffitwindow=false,     
    pdfstartview={FitH},    
    pdftitle={},    
    pdfauthor={},     
    colorlinks=true,       
   linkcolor=blue,          
    citecolor=blue,        
    filecolor=black,      
    urlcolor=blue}           

 \setcounter{tocdepth}{1}

 \begin{document}

\title{Excheangeability and irreducible rotational invariance}
%
\author{Paolo Baldi, Domenico Marinucci, Stefano Trapani}
\address{Department of Mathematics, University of Rome Tor Vergata}

\begin{abstract}
In this note we prove that a finite family $\{X_1,\dots,X_d\}$ of real r.v.'s that is exchangeable and such that $(X_1,\dots,X_d)$ is invariant with respect to a subgroup of $SO(d)$ acting irreducibly, is actually invariant with respect to the action of the full group $SO(d)$. Three immediate consequences are deduced: a characterization of isotropic spherical random eigenfunctions whose Fourier coefficients are exchangeable, an extension of Bernstein's characterization of the Gaussian and a characterization of the Lebesgue measure on the sphere.
\end{abstract}
   \maketitle

AMS Classification: 20C30, 20G45, 22E46, 60B15, 60G60, 60G09
\medskip

Keywords and Phrases: Group Representations, Exchangeability, Invariant Random Fields, Fourier Coefficients, Random Eigenfunctions, Characterizations of the Gaussian.

\footnote{We acknowledge financial support of the MUR Department of Excellence Programme MatModTov, and of the Prin 2022 \emph{Grafia}}

\section{Introduction}
In this paper, we investigate characterizations of exchangeable random variables (r.v.'s) when, in addition, they enjoy some invariance with respect to a group of rotations.

As is well-known, an infinite sequence $\{X_1,X_2,\dots\}$ of random variables on some probability space is exchangeable if and only if for any permutation $\pi:\mathbb{N} \rightarrow \mathbb{N}$ one has that
\[
	\{X_1,\dots ,X_{n},\dots \}\mathop{=}^{Law}\{X_{\pi(1)},\dots ,X_{\pi(n)},\dots \} .
\]
Under this condition, the De Finetti-Hewitt-Savage Theorem implies the existence of a random variable $Y$ on the same probability space such that the r.v.'s $\{X_1,X_2,\dots\}$ are conditionally independent given $Y$. The original statement of De Finetti's Theorem referred to Bernoulli's r.v.'s, but it has been shown to hold for r.v.'s taking values in any Polish space; research in this area is still active, to the point that it is considered by some author a foundational result not only in probability/statistics but for mathematics as a whole (see \cite{Alam} and the references therein).

While the De Finetti-Hewitt-Savage results hold in great generality for infinite sequences, it is also well-known that they fail in general for finite ones. A classical reference by Diaconis and Freedman shows that one can at most obtain upper bounds in total variation between finite-exchangeable sequences and mixture of independent random variables; these bounds are sharp, indeed there exist explicit counterexamples where the bound is saturated, see \cite{DiaFreeAoP}. Another celebrated paper by the same authors (\cite{DiaFreeAIHP}) shows that uniform random vectors on a sphere of growing dimension $n$ any $k$-dimensional set of coordinates suitably normalized is asymptotically close to Gaussian in total variation distance, provided that $k=o(n)$ as $n \rightarrow \infty$.

Our purpose here is to investigate some further characterization of exchangeability for finite-dimensional sequences. In particular, in the next section we prove an algebraic result which implies that invariance with respect to a subgroup of $SO(d)$ acting irreducibly in addition to exchangeability implies invariance with respect to the full group $SO(d)$.

Next, in \S\ref{sec-eig} we deduce properties of the Fourier coefficients of an isotropic random field on the sphere under the assumption that they are exchangeable. {\it En passant} we obtain a characterization of the Lebesgue measure of the spheres $\mathbb{S}^{d-1}$ of $\R^d$; finally in \S\ref{sec-bernstein} we give a proof of a stronger version of Bernstein's Theorem on the characterization of the Gaussian.
%
\section{The main decomposition statement}
Let as consider the special  orthogonal groups $SO(k), SO(N),$ with the corresponding Lie algebras $so(k),so(N)$; assume $k \geq 3, N \geq 3$.
Let
$$
\phi : SO(k) \to SO(N)
$$
be a homomorphism, i.e. a representation of $SO(k)$ on $\R^N$.  Let $S_N$ be the symmetric group of $N$ letters and, for $\sigma \in S_N$ let $E_\sigma$ be the permutation matrix corresponding to the permutation $\sigma$. Then we have the following
main result.
\begin{theorem}\label{mainref} Assume that the representation $\phi$ has no one dimensional invariant subspaces, then  the subgroup of $SO(N)$ generated by the elements $E_\sigma^{-1}  \phi(g) E_\sigma$ with $g \in SO(k)$ and $\sigma \in S_N$ is all of $SO(N)$.

In particular  if $\sigma_0$ is a fixed permutation of sign $-1$ then the subgroup of $O(N)$  generated by
$\{ E_{\sigma_0}, E_\sigma^{-1}  \phi(g) E_\sigma,\ \sigma \in S_N, g \in SO(k) \}$ is all of $O(N)$.
\end{theorem}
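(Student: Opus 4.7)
The plan is to work at the Lie algebra level and exploit the $S_N$-representation structure of $so(N)=\Lambda^2\R^N$. Let $H$ denote the subgroup of $SO(N)$ generated by the elements $E_\sigma^{-1}\phi(g)E_\sigma$, and let $\mathfrak{h}\subseteq so(N)$ be the Lie subalgebra generated by the derivatives $E_\sigma^{-1}d\phi(X)E_\sigma$ with $X\in so(k)$ and $\sigma\in S_N$. Since each $E_\sigma^{-1}\phi(SO(k))E_\sigma$ is a connected Lie subgroup contained in $H$, the standard generation theorem for analytic subgroups identifies $H$ with the connected Lie subgroup of $SO(N)$ whose Lie algebra is $\mathfrak{h}$; everything will therefore reduce to showing $\mathfrak{h}=so(N)$. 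The second assertion is then immediate: since $E_{\sigma_0}$ has determinant $-1$ and $SO(N)$ has index two in $O(N)$, adjoining it yields $O(N)$.

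I would first observe that $\mathfrak{h}$ is stable under conjugation by every permutation matrix, because $\mathrm{Ad}(E_\tau)$ sends the generator $E_\sigma^{-1}d\phi(X)E_\sigma$ to $E_{\sigma\tau^{-1}}^{-1}d\phi(X)E_{\sigma\tau^{-1}}$, which is again one of the generators. I would then decompose $so(N)=\Lambda^2\R^N$ as an $S_N$-module. Writing $\R^N=\R\mathbf{1}\oplus V$ with $\mathbf{1}=(1,\dots,1)$ and $V=\mathbf{1}^\perp$ the standard $(N-1)$-dimensional representation, one obtains
\[
 \Lambda^2\R^N \;=\; (V\we\mathbf{1})\oplus\Lambda^2 V.
\]
For $N\ge 3$, the two summands $V\we\mathbf{1}\cong V$ (dimension $N-1$) and $\Lambda^2 V$ (dimension $\binom{N-1}{2}$) are non-isomorphic irreducible $S_N$-representations (the Specht modules $S^{(N-1,1)}$ and $S^{(N-2,1,1)}$). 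Consequently the only $S_N$-invariant subspaces of $so(N)$ are $0$, $V\we\mathbf{1}$, $\Lambda^2 V$, and $so(N)$, so $\mathfrak{h}$ must coincide with one of these four.

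The core of the argument is to rule out the three proper alternatives. The trivial case $\mathfrak{h}=0$ is absurd because $\phi$ is nontrivial. The option $\mathfrak{h}=V\we\mathbf{1}$ is excluded because this subspace is not a Lie subalgebra: a direct computation gives, for $v,w\in V$,
\[
 [\,v\we\mathbf{1},\,w\we\mathbf{1}\,] \;=\; -N\,(v\we w),
\]
and $v\we w\in\Lambda^2 V$ does not lie in $V\we\mathbf{1}$ whenever $v,w$ are linearly independent (a decomposable bivector $u\we\mathbf{1}$ has $\mathbf{1}$ in its plane, while $v\we w$ has plane inside $V=\mathbf{1}^\perp$). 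The delicate case, and the main obstacle, is $\mathfrak{h}=\Lambda^2 V$: this subspace is precisely the Lie algebra $so(V)\cong so(N-1)$ of the stabilizer of $\mathbf{1}$ in $SO(N)$, i.e.\ the antisymmetric matrices annihilating $\mathbf{1}$. If $\mathfrak{h}$ were contained in it, then $d\phi(X)\mathbf{1}=0$ for every $X\in so(k)$, so by connectedness of $SO(k)$ one would have $\phi(g)\mathbf{1}=\mathbf{1}$ for every $g$, making $\R\mathbf{1}$ a one-dimensional invariant subspace of $\phi$ and contradicting the hypothesis. Eliminating this last alternative forces $\mathfrak{h}=so(N)$ and completes the proof. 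This is precisely the step where the no-one-dimensional-invariant-subspace assumption enters; the other steps are purely algebraic facts about the $S_N$-action on $\Lambda^2\R^N$.
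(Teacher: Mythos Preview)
Your proof is correct and follows the same overall strategy as the paper: pass to the Lie algebra level, use the $S_N$-decomposition of $so(N)$ into two non-isomorphic irreducibles (your $V\wedge\mathbf{1}$ and $\Lambda^2 V$ are exactly the paper's $V_1$ and $V_2$), and rule out each proper piece. The $\Lambda^2 V$ case is eliminated identically in both arguments, via the invariant line $\R\mathbf{1}$. The one real difference is how $V_1=V\wedge\mathbf{1}$ is excluded. You work with the Lie algebra $\mathfrak{h}$ \emph{generated} by the conjugates and simply observe that $V\wedge\mathbf{1}$ is not bracket-closed (your computation $[v\wedge\mathbf{1},w\wedge\mathbf{1}]=-N\,v\wedge w\in\Lambda^2 V$), so a Lie subalgebra cannot equal it. The paper instead works with the \emph{linear span} $W=\sum_\sigma E_\sigma\,d\phi_e(so(k))\,E_\sigma^{-1}$, which is not a priori a Lie algebra, and rules out $d\phi_e(so(k))\subseteq V_1$ by a coordinate computation showing that any Lie subalgebra contained in $V_1$ is at most one-dimensional; this forces a preliminary step (using simplicity of $so(k)$, with a separate case $k=4$) establishing $\dim d\phi_e(so(k))\ge 3$. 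Your shortcut sidesteps that dimension argument entirely and is cleaner; the paper's route, in exchange, proves the slightly stronger fact $W=so(N)$ and concludes via the open-image argument rather than invoking the generation theorem for connected analytic subgroups.
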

\begin{proof}
Recall that the Lie algebras $so(k)$ for $k \geq 3, k \neq 4$ are simple, i.e. they have no non trivial ideals, and that $so(4)$ is the direct sum of two simple three dimensional ideals (see \cite{erdmann} theorem 12.1, and \cite{helg2} p.~352).

Let us first prove that, the image of $\phi$ has necessarily dimension $\ge 3$. If $k\ge 3$ and $k\not=4$;
since the kernel of the differential $d \phi_e$ ($e$ is the identity element of $SO(k)$) is an ideal of $so(k)$, then $\ker \phi$ can only be either $so(k)$ or $\{0\}$. Since $\phi$ has no one dimensional invariant subspaces, the first occurrence cannot happen and $\phi$ is injective. If $k=4$ then $\ker d \phi_e=so(4)$ is impossible as above and $\ker d \phi_e$ can only be $\{0\}$ or one of the two simple three dimensional ideals of $so(4)$, as $so(4)$ has no other proper ideals (see Serre's book \cite{serre}, chapter 6). In any case $\dim \ker d \phi_e\le 3$ so that, as $so(4)$ has dimension $6$, the image of $d\phi_e$ is $\ge 3$ in any case. In particular if $N =3,$ the map $\phi$ is onto, and the proposition is proved, so we may assume $N \geq 4$.

Next, let $\sigma_1, \sigma_2, \ldots, \sigma_{N!}$  be an enumeration of the permutations in $S_N,$ and let us consider the map $F : SO(k)^{N!} \to SO(N),$
$$
F(g_1,g_2, \ldots,g_{N!}) =  (E_{\sigma_1}  \phi(g_1) E_{\sigma_1}^{-1}) (E_{\sigma_2}  \phi(g_2) E_{\sigma_2}^{-1}) \cdots  (E_{\sigma_{N!}}  \phi(g_{N!}) E_{\sigma_{N!}}^{-1})\ .
$$
Note that, if $e_2$ denotes the identity element in $SO(N)$, then $F(e,\ldots,e) = e_2$. Let $W$ be the image  of the differential of $F$ at $(e,\ldots,e)$. Then $W$  is a vector subspace of the tangent space to $SO(N)$ at $e_2$ identified with the Lie algebra $so(N)$ which is the space of real $N \times N$ antisymmetric matrices.  By the Leibnitz rule we have:
$$
W = E_{\sigma_1}d \phi_{e}(so(k)) E_{\sigma_1}^{-1} + (E_{\sigma_2}) d \phi_{e}(so(k)) E_{\sigma_2}^{-1} + \cdots + E_{\sigma_{N!}} d \phi_{e}(so(k)) E_{\sigma_{N!}}^{-1}.
$$
Therefore $W$ is a vector subspace of the Lie algebra $so(N)$ which is invariant under conjugation with respect to any permutation matrix. The remainder of the proof is concerned with showing that $W = so(N)$.
This entails that  that the differential at $(e,\ldots,e)$ of the  map $F$ is onto and in particular the image of $F$ contains a neighborhood of $e$ in $SO(N)$. Since $SO(N)$ is connected, the group generated by a neighborhood of the identity is all of $SO(N)$ and this will conclude the proof.

To this goal we shall take advantage of the decomposition of the complexified Lie algebra $so(N)^{\C}$ under the action of the permutation group $S_N$ that is provided in \cite{can}.

We already know that $W$ is invariant under the representation of $S_N$ $\Lambda_\sigma: A \mapsto E_\sigma A E_{\sigma}^{-1},$, $\sigma \in S_N$, $A \in so(N)$.

Let us consider the  complexified representation $\Lambda^{\mathbb{C}}$ obtained considering the action $\Lambda_\sigma$ on the space  $so(N)^{\mathbb{C}}$ of complex antisymmetric matrices. In \cite{itz} it is proved that, for $N \geq 4$, $\Lambda^{\mathbb{C}}$ decomposes into the direct sum of two irreducible  non isomorphic subrepresentations  $V^{\mathbb{C}}_1,$ $V^{\mathbb{C}}_2$ and the authors  determine the isomorphism class of each irreducible component.

In particular $V^{\mathbb{C}}_1$ is isomorphic to the so called standard representation of $S_N$ i.e. to  the representation of $S_N$ by permutation of the coordinates on the subspace of $\mathbb{C}^N$  $\{z\in\mathbb{C}^N; z_1 + \ldots + z_N = 0\}$, so that  $\dim(V^{\mathbb{C}}_1) = N-1$, and, as
$\dim(so(N))=\frac 12\,N(N-1)$, $\dim(V^{\mathbb{\C}}_2) = \frac12\,(N-1)(N-2)$.

As for $V^{\mathbb{\C}}_2$, let $v_0 =\, ^t\!(1,,\dots,1) \in \mathbb{C}^N$, so that $v_0$ is fixed under the action of $S_N$.
Note that if $v\in \mathbb{C}^N$ is in the kernel of the complex antisymmetric matrix $A$  and $\sigma \in S_N,$ then $ E_\sigma v$ is in the kernel of $\Lambda_\sigma^{\mathbb{C}}A=E_\sigma A E_{\sigma}^{-1}$. Therefore the subspace $U=  \{ A \in so(N)^{\mathbb{\C}} : A v_0 = 0 \}$ is a
$\Lambda^{\mathbb{C}}$-invariant subspace.  Moreover it is a Lie algebra and more precisely the Lie algebra of the subgroup of $SO(N)^{\mathbb{\C}} $ fixing the vector $v_0,$ whose dimension is $\frac12\,(N-1)(N-2)$. If $N > 4$ clearly  $V^{\mathbb{C}}_2=U$. If $N = 4$   the subspaces $V^{\mathbb{C}}_1$ and  $V^{\mathbb{C}}_2$ both have dimension  $3$,  however looking at the character on a two cycle  of the action of $S_4$ on $U$  and on $V^{\mathbb{C}}_1$ we see that again $U = V^{\mathbb{C}}_2$. Details are postponed in the subsequent Lemma \ref{rapp}.

Let $h(A,B) = tr( A ^t\overline{B})$ be the $so(N)$ conjugation-invariant positive Hermitian product. As the orthogonal to  $V^{\mathbb{\C}}_2$ with respect to $h$ is also $\Lambda^\C$-invariant, $V^{\mathbb{C}}_1$ is the $h$-orthogonal subspace  to $V^{\mathbb{\C}}_2$.   Observe that both subspaces $V^{\mathbb{C}}_1$ and $V^{\mathbb{C}}_2$ are invariant with respect to complex conjugation so that the spaces $V_1,V_2$ of their respective real points give rise to two irreducible  real subrepresentations  of $\Lambda$ with  $so(N)  = V_1 + V_2$. As $W$ is itself a $\Lambda$-invariant subspace of $so(N)$, we therefore need to show that $W \nsubseteq V_1$ and $ W \nsubseteq V_2,$ i.e. that $d \phi_e(so(k)) \nsubseteq V_1$ and $d \phi_e(so(k)) \nsubseteq V_2$.

As $V_2$ is the subspace of the matrices that annihilate the vector $v_0=$ $^t\!~(1,\dots,1)$, if $d \phi_e(so(k)) \subseteq V_2$ then the subspace $\mathbb{R} v_0$ of $\mathbb{R}^N$ would be $\phi$ invariant, against the assumptions.

Let us finally prove that also $d \phi_e(so(k)) \subseteq V_1$ cannot be. Let $f_0  =\, ^t\!(1,0,\dots,0)$
and let $B \in SO(N)$ be a matrix such that $B f_0 = \frac1{\sqrt{N}}\,{v_0}$. Since the inner product  $\Re(h)$ on $so(N)$ induced by $h$ is also $SO(N)$ invariant with respect to conjugation by $B$, we see that $B^{-1}V_1B$ and   $B^{-1}V_2B $ are $\Re(h)$-orthogonal.
Now, as the matrices of $V_2$ annihilate $v_0$, $B^{-1}V_2B$ is the set of matrices that annihilate $f_0$, i.e. of the form
$$
\begin{bmatrix}
	0 &   0 \\ 0 &  A
\end{bmatrix}
$$
with $A \in so(N-1)$ and
$B^{-1}V_1B$, being the $h$-orthogonal to $B^{-1}V_2B$, is the set of matrices of the form
$$
\begin{bmatrix} 0 &  -^t\!X \\ X & {\bf{0}}
\end{bmatrix}
$$
with $X \in \mathbb{R}^{N-1}$ and ${\bf{0}}$ is the zero matrix in $so(N-1)$.

Assume by contradiction that  $B^{-1}d \phi_e(so(k)) B \subseteq B^{-1}V_1 B$.

Choose two matrices $C_1,C_2 $ in $B^{-1}d \phi_e(so(k)) B \subseteq B^{-1}V_1B$
with $C_1 \neq 0 $. Since $B^{-1}d \phi_e(so(k)) B$ is a Lie algebra, the  matrices  $C_1, C_2, $ and $ [C_1,C_2] = C_1C_2 - C_2C_1 $ also must belong to  $  B^{-1}V_1 B$. This means that the entries $(C_1)_{i,j}$, $(C_2)_{i,j}$ and $(C_1C_2 - C_2C_1)_{i,j}$ vanish for $i > 1, j > 1$. Choose $i_0 > 0$ such that $(C_1)_{1,i_0} \neq 0$ for some $i_0 > 1$
and fix an index $j$ with $2 \leq j \leq N$. We   have
$$
\dlines{
	0 = [C_1,C_2]_{i_0,j} = (C_1C_2)_{i_0,j}-(C_2C_1)_{i_0,j}\cr
	=\sum_k \bigl((C_1)_{i_0,k} (C_2)_{k,j} -  (C_2)_{i_0,k} (C_1)_{k,j} \bigr)\cr
	=  (C_1)_{i_0,1} (C_2)_{1,j} -  (C_2)_{i_0,1} (C_1)_{1,j}
	\cr
}
$$
which gives
$$
(C_2)_{1,j}  = \frac{(C_2)_{1,i_0} (C_1)_{1,j}}{(C_1)_{1,i_0}}\ \cdotp
$$
Therefore each value  $(C_2)_{1,j}$ is determined by $C_1$ and by $(C_2)_{1,i_0}$.
This means  that the space $B^{-1}d\phi_e(so(k))B$ is one dimensional, but we
know that it is at least three dimensional. This gives the desired contradiction.
\end{proof}
\begin{lemma}\label{rapp} Let $T_1 = \{ Z \in \mathbb{C}^4 : \sum_j z_j = 0 \},$ and consider the representation $\alpha_1$ of the symmetric group $S_4$ on $T_1$ by permutation of the coordinates  (the so called standard representation).

Let $so(4,\mathbb{C})$ be the space of complex antisymmetric $4 \times 4$ matrices, and let $\alpha_2$ be the representation of  $S_4$ on $so(4)$ given by $\alpha_2(\sigma)(A) = E_\sigma A E_\sigma^{-1}$, where $E_\sigma$ is the permutation matrix corresponding to $\sigma \in S_4.$  The subspace $U$ of matrices  in $so(4)$ having the vector $^t(1, \ldots, 1)$ in their kernel is invariant under $\alpha_2.$

Let $\sigma_0$ be the permutation exchanging $1$ and $2$ and keeping $3$ and $4$ fixed. Then the character of $\alpha_1$ on $\sigma_0$ is $1$ and the character of ${\alpha_2}_{|U} $ on $\sigma_0$ is $-1$.
\end{lemma}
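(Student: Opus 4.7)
The plan is to compute the two characters by direct, elementary means, exploiting that everything here has small dimension. For $\alpha_1$, I would use the $S_4$-decomposition $\mathbb{C}^4=T_1\oplus\mathbb{C}v_0$, with $v_0={}^{t}(1,1,1,1)$. The character of the full permutation representation of $\mathbb{C}^4$ at any $\sigma$ counts the fixed letters of $\sigma$; for the transposition $\sigma_0=(1\,2)$ this is $2$, so subtracting the $1$ coming from the trivial summand $\mathbb{C}v_0$ yields $\chi_{\alpha_1}(\sigma_0)=1$.

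For $\alpha_2|_U$ I would fix an explicit basis of $U$. Writing $E_{ij}\in so(4,\mathbb{C})$ for the antisymmetric matrix whose only nonzero entries are $+1$ at position $(i,j)$ and $-1$ at $(j,i)$, the six matrices $E_{ij}$ with $i<j$ form a basis of $so(4,\mathbb{C})$, and the condition $Av_0=0$ becomes the vanishing of all row sums, a linear system of rank $3$. A convenient basis of $U$ is then
\[
U_1=E_{12}-E_{13}+E_{23},\qquad U_2=E_{13}-E_{14}+E_{34},\qquad U_3=E_{12}-E_{14}+E_{24},
\]
each with manifestly zero row sums.

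The action of $\sigma_0$ on $E_{ij}$ is to swap the indices $1$ and $2$, with the sign convention $E_{ji}=-E_{ij}$: $E_{12}\mapsto -E_{12}$, $E_{13}\leftrightarrow E_{23}$, $E_{14}\leftrightarrow E_{24}$, $E_{34}\mapsto E_{34}$. Applying this termwise to each $U_k$ and re-expressing the result in the basis $(U_1,U_2,U_3)$, I expect to find $\alpha_2(\sigma_0)(U_1)=-U_1$, $\alpha_2(\sigma_0)(U_3)=-U_3$ and $\alpha_2(\sigma_0)(U_2)=U_1+U_2-U_3$, so that the matrix of $\alpha_2(\sigma_0)|_U$ in this basis has diagonal $(-1,1,-1)$ and trace $-1$.

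The only real friction is the coordinate re-expression of $\alpha_2(\sigma_0)(U_2)$, which is pure bookkeeping. A handy cross-check is the $\Lambda^2$-identity $\chi_{\alpha_2}(\sigma_0)=\tfrac12\bigl(\chi_{\mathbb{C}^4}(\sigma_0)^2-\chi_{\mathbb{C}^4}(\sigma_0^2)\bigr)=\tfrac12(4-4)=0$ on the whole of $so(4,\mathbb{C})\cong\Lambda^2\mathbb{C}^4$: the trace over $U$ must cancel the trace over the complementary irreducible summand $V_1^{\mathbb{C}}\cong T_1$, which independently confirms the value $-1$.
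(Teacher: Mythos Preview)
Your proof is correct and follows essentially the same approach as the paper's: a direct trace computation in an explicit three-element basis of $U$, after which the diagonal entries are read off. The only cosmetic differences are that you give a self-contained fixed-point argument for $\chi_{\alpha_1}(\sigma_0)=1$ where the paper cites Fulton--Harris, you choose a slightly different basis of $U$ (two of your three vectors coincide with the paper's), and you add the $\Lambda^2\mathbb{C}^4$ cross-check, which the paper does not include.
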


\begin{proof}
The value of the character of $\alpha_1$ on $\sigma_0$ is $1$ as shown for example in Fulton-Harris book \cite{fultonharris},  \S 2.3 p.~19).
	
Now if
$$
A =  \begin{pmatrix} 0 &a & b & c \\ - a & 0 & d & e \\ -b & - d & 0 & f \\ -c & -e &-f & 0
\end{pmatrix}
$$
then  the subspace  $U$ of matrices in $so(4)$ which have the  vector $^t(1,\ldots,1)$   in their kernel corresponds to  the vector  subspace $U  \subseteq \mathbb{C}^6$
$$
\displaylines{
	\{ ^t(a,b,c,d,e,f) \in \mathbb{C}^6 : a + b + c = 0, -a  + d + e = 0,\qquad\qquad\qquad\cr
 \qquad\qquad\qquad\qquad\qquad-b - d + f = 0, -c - e - f = 0 \}\cr
	=  \{ ^t(a,b,-a-b,d,a-d,b+d) \in \mathbb{C}^6, ^t(a,b,d) \in \mathbb{C}^3 \}\ . \cr
}
$$
So a basis of $U$ is $ \beta := \{^t(1,0,-1,0,1,0),^t\!(0,1,-1,0,0,1),^t\!(0,0,0,1,-1,1) \}.$
On the other hand for $A$ as above
$$
\alpha_2(\sigma_0)(A) =  \begin{pmatrix}
	0 &-a & d & e \\  a & 0 & b & c \\ -d & - b & 0 & f \\ -e & -c &-f & 0
\end{pmatrix}
$$
Then the $3 \times 3$ matrix $B$ associated to the linear map $\alpha_2(\sigma_0)_{|U}$ with respect to the base $\beta$ is:
$$ B =  \left( \begin{array}{lcc} -1 &0 & 0 \\  0 & 0 & 1 \\ 0 & 1 & 0  \end{array} \right)$$ and $tr(B) = -1.$
\end{proof}
\begin{remark}
Note that any irreducible representation $SO(3) \to GL(N,\mathbb{C})$ is a real representation (see \cite{can}) i.e. there exists a basis of $\mathbb{C}^N$
such that in this basis the representation takes values in $SO(N)$.
\end{remark}
\begin{remark}
Since any semisimple Lie algebra is a direct sum of simple ideals  and any simple Lie algebra has dimension at least $3$, in the above theorem $SO(k)$ can be replaced by any connected semisimple Lie group. G.
\end{remark}
\section{Random eigenfunctions with exchangeable coefficients}\label{sec-eig}
Let $(\Omega,\cl F,\P)$ be a probability space and let us consider a finite-variance, isotropic random fields on the sphere of $\R^3$, i.e. an application $T:\Omega \times \mathbb{S}^2 \rightarrow \mathbb{R}$ such that $\mathbb{E}[T^2]< \infty$ and
\[
	T(g\, \cdot)\enspace\mathop{=}^{\cl L}\enspace T(\cdot) \text{ for all } g \in SO(3)\ ,
\]
where the identity in law holds in the sense of processes; from now on, without loss of generality, we will take all these fields to be zero-mean, i.e. $\mathbb{E}[T(x)]=0$ for every $x\in\mathbb{S}^2$.

Recall that the Laplace-Beltrami operator on $\mathbb{S}^2$ can be written in coordinates as
\[
\Delta_{\mathbb{S}^2}=\frac{1}{\sin \theta}\frac{\partial}{\partial \theta} \sin \theta \frac{\partial}{\partial \theta}+\frac{1}{\sin^2 \theta}\frac{\partial}{\partial \varphi}
\]
with eigenvalues $\lambda_{\ell}=-\ell(\ell+1)$; the corresponding eigenspaces have dimension $2\ell+1$
respectively and the group $SO(3)$ acts irreducibly on them. Note that the ``small'' group $SO(3)$ is able
to act irreducibly on euclidean spaces of arbitrarily large dimension. Their elements are known as spherical harmonics. It is possible to fix for each of them a (real-valued) orthonormal basis, whose elements we will write $Y_{\ell m}:\mathbb{S}^2 \rightarrow \mathbb{R}, \ell=0,1,2,\dots, m=-\ell,\dots,\ell$. There are some standard choices for these elements, most notably the so-called fully normalized spherical harmonics, (see e.g. \cite{dogiocam}, p.~64) but their exact analytic expressions are not relevant for our results, which invariant invariant with respect to the choice of basis.

A standard result in the theory of isotropic spherical random fields is the well-known spectral representation theorem, stating that the following representation holds, in the mean-square sense:
\begin{equation}\label{dev}
T(x)=\sum_{\ell=0}^{\infty}\sum_{m=-\ell}^{\ell}a_{\ell m}Y_{\ell m}(x)\ ,
\end{equation}
for $(a_{\ell m})_{\ell,m}$ a triangular array of real-valued random coefficients with zero mean and uncorrelated, i.e.
\[
	\mathbb{E}[a_{\ell m}a_{\ell' m'}]=C_{\ell}\delta _{\ell}^{\ell'}\delta_{m}^{m'}\ ,
\]
where the nonnegative sequence $(C_{\ell})_{\ell=0,1,\dots}$ is the angular power spectrum of the field and satisfies
\[
	\mathbb{E}[T^2]=\sum_{\ell=1}^{\infty}\frac{2\ell+1}{4 \pi}\,C_{\ell} < \infty .
\]
As the spherical random harmonic coefficients are always uncorrelated for isotropic fields, then they are independent in the Gaussian case. More surprisingly, the converse is also true: i.e., for isotropic random fields, if the spherical random harmonic coefficients are independent, then necessarily they are also Gaussian as well as the field. This fact was shown first in \cite{BM06} and then extended to more general random fields on compact spaces in \cite{MR2342708}, \cite{trap}.

We now provide a characterization of the random spherical harmonic coefficients of isotropic random fields under the assumptions that these coefficients are exchangeable.
Indeed, for $\ell=1,2,\dots$ let  $\{u_{\ell m}\}_{m=-\ell,\dots,\ell}$ denote the components of a sequence of $(2\ell+1)$-dimensional random vectors $u_{\ell.}$, having uniform distribution on the sphere $\mathbb{S}^{2\ell+1}$. The main result of this section is the following:

\begin{theorem}\label{Teorema1} Let $a_{\ell.}=(a_{\ell m})_{m=-\ell,\dots ,\ell}$ be the $(2\ell+1)$-dimensional random vector of the random coefficients of the $\ell$-th component of an isotropic spherical random field. Then, if the $a_{\ell m}$ are exchangeable, their law is rotationally invariant and there exist a nonnegative random variable $\eta_{\ell}$, independent of $a_{\ell.}$, such that $\mathbb{E}[\eta_{\ell}^2]=(2\ell+1)C_{\ell}$ and the following identity in distribution hold, for all $\ell=1,2,\dots $
\begin{equation}\label{decomp}
a_{\ell.}=u_{\ell.} \times \eta_{\ell}\ .
\end{equation}
\end{theorem}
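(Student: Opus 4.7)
The plan is to apply Theorem \ref{mainref} to the Wigner representation of $SO(3)$ and then to invoke the classical polar decomposition of a rotationally invariant random vector in $\R^n$. Fix $\ell\ge 1$. The $\ell$-th eigenspace of $\Delta_{\mathbb{S}^2}$ has dimension $2\ell+1$ and, as recalled above, $SO(3)$ acts on it irreducibly via a homomorphism $\phi_\ell\colon SO(3)\to SO(2\ell+1)$ (the Wigner $D$-matrices; recall the Remark after Theorem \ref{mainref} for real-valuedness). Since $2\ell+1\ge 3$, irreducibility rules out any one-dimensional $\phi_\ell$-invariant subspace, so Theorem \ref{mainref} applies with $k=3$ and $N=2\ell+1$. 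Moreover, isotropy of $T$ translates, through the spectral expansion (\ref{dev}) and the identification of $a_{\ell\cdot}$ with the coordinates of the $\ell$-th projection of $T$ in the basis $\{Y_{\ell m}\}_m$, into the invariance
\[
\phi_\ell(g)\,a_{\ell\cdot}\mathop{=}^{\cl L}a_{\ell\cdot}\qquad\text{for every }g\in SO(3).
\]

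Exchangeability of the coordinates $\{a_{\ell m}\}_{m}$ is precisely the statement that $E_\sigma\,a_{\ell\cdot}\mathop{=}^{\cl L}a_{\ell\cdot}$ for every permutation matrix $E_\sigma$, $\sigma\in S_{2\ell+1}$. Combining the two invariances, the law of $a_{\ell\cdot}$ is invariant under every element of the form $E_\sigma^{-1}\phi_\ell(g)E_\sigma$, and hence under the subgroup of $SO(2\ell+1)$ that they generate. By Theorem \ref{mainref} this subgroup equals $SO(2\ell+1)$, so the law of $a_{\ell\cdot}$ is rotationally invariant. This proves the first assertion of the statement.

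Once rotational invariance is established, the decomposition (\ref{decomp}) is a standard polar decomposition. Set $\eta_\ell:=\|a_{\ell\cdot}\|$; on the event $\{\eta_\ell>0\}$ put $u_{\ell\cdot}:=a_{\ell\cdot}/\eta_\ell$, and on $\{\eta_\ell=0\}$ define $u_{\ell\cdot}$ to be an independent uniform vector on the unit sphere of $\R^{2\ell+1}$. By the previous step, the conditional law of $a_{\ell\cdot}$ given $\eta_\ell$ is an $SO(2\ell+1)$-invariant probability measure on the sphere of radius $\eta_\ell$, which by uniqueness of the Haar probability on that sphere equals $\eta_\ell$ times the uniform distribution on the unit sphere. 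Hence $u_{\ell\cdot}$ is uniform on the unit sphere, independent of $\eta_\ell$, and (\ref{decomp}) follows. The moment identity
\[
E[\eta_\ell^2]=\sum_{m=-\ell}^{\ell}E[a_{\ell m}^2]=(2\ell+1)\,C_\ell
\]
is then immediate from the orthogonality relations recalled before the statement. I do not anticipate a real obstacle: the main work is already contained in Theorem \ref{mainref}, and the only point requiring care is the bookkeeping on the event $\{a_{\ell\cdot}=0\}$, handled by the independent extension just described.
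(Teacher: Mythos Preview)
Your argument is correct and follows essentially the same route as the paper: the paper deduces Theorem \ref{Teorema1} from Proposition \ref{prop11}, whose proof is exactly your two steps---apply Theorem \ref{mainref} to the irreducible $SO(3)$-action on the $(2\ell+1)$-dimensional eigenspace to upgrade to full $SO(2\ell+1)$-invariance, then read off the polar decomposition of a rotationally invariant vector. Your write-up is in fact slightly more detailed than the paper's (you spell out the handling of $\{a_{\ell\cdot}=0\}$ and the moment identity), but there is no difference in strategy.
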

Theorem \ref{Teorema1} is an immediate consequence of Proposition \ref{prop11} below.
\begin{remark}\label{remrem} In \cite{BM06} (see also \cite{MR2342708}, \cite{trap}) the authors consider the set $(Y_{\ell m})_{\ell m}$ of the fully normalized spherical harmonics. These are complex valued functions, so that, in order to obtain a real random field, the coefficients $a_{\ell m}$ must satisfy the relation
$$
\overline{a_{\ell,-m}}=	a_{\ell,m}\ .
$$
The authors prove that, for an isotropic random field, if the coefficients $a_{\ell,0}, \dots,a_{\ell,m}$ are independent, then they are necessarily Gaussian. Note that no assumption of independence is made concerning
the real and the imaginary parts of the $a_{\ell m}$.
This assumption is therefore in some sense not comparable with the exchangeability  of this vector.
	
Note however that when the $(Y_{\ell m})_{\ell m}$ are the fully normalized spherical harmonics, then the complex random r.v. $a_{\ell m}$ is rotationally invariant, so that, for any isotropic random field, the real and imaginary parts of $a_{\ell m}$ are exchangeable r.v.'s.
\end{remark}
\begin{remark}
Due to the results \cite{DiaFreeAIHP} mentioned above, it is clear that in the "high energy" limit $\ell \rightarrow \infty$, for all $k \in \mathbb{N}$ every subset $(a_{\ell, \pi(1)},\dots ,a_{\ell, \pi(k)})$ of cardinality $k=o(\ell)$ of exchangeable spherical harmonic coefficients, suitably normalized, converges in total variation to the mixture of a vector of i.i.d. Gaussian variables with a random standard deviation. In view of these characterizations, one can consider Theorem \ref{Teorema1} as an ``approximate'' De Finetti style result; note that for finite $\ell$ the components of $u_{\ell.}$ are identically distributed, but not independent.
\end{remark}
\begin{prop}\label{prop11} Let $X=^t\!\!(X_1,\dots,X_d)$ be a random vector. Then if $X$ is invariant with respect to a the action of a subgroup of rotations acting irreducibly on $\R^d$ and the family
$\{X_1,\dots,X_d\}$ is exchangeable, then $X$ is rotationally invariant and the r.v.'s $|X|$ and $\frac X{|X|}$ are independent.
\end{prop}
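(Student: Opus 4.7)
The idea is to recast both probabilistic invariance hypotheses as invariance of the law of $X$ under a single subgroup $H\subseteq O(d)$, and then to use Theorem \ref{mainref} to force $H=O(d)$. Exchangeability of $\{X_1,\dots,X_d\}$ is precisely the statement that $E_\sigma X\mathop{=}^{\cl L}X$ for every $\sigma\in S_d$, while the hypothesis gives $gX\mathop{=}^{\cl L}X$ for every $g$ in a subgroup $G\subseteq SO(d)$ acting irreducibly on $\R^d$. Since the set of elements of $O(d)$ that preserve the law of $X$ is itself a subgroup of $O(d)$, the distribution of $X$ is automatically invariant under the full subgroup $H\subseteq O(d)$ generated by $G\cup\{E_\sigma:\sigma\in S_d\}$.

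The plan is then to apply Theorem \ref{mainref} with $\phi\colon G\hookrightarrow SO(d)$ as the representation, invoking the Remark following that theorem to allow $G$ to be any connected semisimple Lie group in place of $SO(k)$. The irreducibility of the $G$-action on $\R^d$ precludes any one-dimensional $\phi$-invariant subspace, so the hypothesis of Theorem \ref{mainref} is satisfied; choosing for $\sigma_0$ any transposition in $S_d$ (so that $\det E_{\sigma_0}=-1$), the second assertion of Theorem \ref{mainref} yields $H=O(d)$. Consequently the law of $X$ is $O(d)$-invariant, which is the rotational invariance claimed. This group-theoretic step is the only substantive one: everything that precedes it is a purely formal translation between invariance in distribution and invariance under a group action.

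For the independence of $|X|$ and $X/|X|$ I would argue as follows. The map $x\mapsto|x|$ is $O(d)$-invariant while $x\mapsto x/|x|$ is $O(d)$-equivariant, so once the law of $X$ is known to be $O(d)$-invariant a disintegration of this law along $|X|$ shows that the conditional law of $X/|X|$ given $|X|=r$ is itself $O(d)$-invariant on $\mathbb{S}^{d-1}$ for almost every $r>0$. By the uniqueness of the $O(d)$-invariant probability measure on $\mathbb{S}^{d-1}$ this conditional law is the normalized Lebesgue measure, independent of $r$; hence $|X|$ and $X/|X|$ are independent. (A possible atom at $X=0$, where $X/|X|$ is undefined, is harmless and can be handled by any conventional assignment.) The main obstacle is the invocation of Theorem \ref{mainref}, which supplies exactly the ingredient one needs; the rest is routine.
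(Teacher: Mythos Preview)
Your argument is essentially the paper's: invoke Theorem \ref{mainref} to upgrade the combined invariance (under the irreducible subgroup and under permutations) to full rotational invariance, then deduce independence of the radial and angular parts from this. The paper stops at $SO(d)$-invariance (your detour to $O(d)$ via a transposition is correct but unnecessary, since $SO(d)$ already acts transitively on $\mathbb{S}^{d-1}$) and proves independence by showing directly that $A\mapsto\P(X/|X|\in A,\,|X|\in B)$ is a rotationally invariant measure on $\mathbb{S}^{d-1}$ for each fixed $B$, hence a multiple of Lebesgue---your disintegration is an equivalent repackaging.
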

\proof By Theorem \ref{mainref} the law of $X$ is invariant by the action of $SO(d)$ hence the rotational invariance. It is well known that if an $d$-dimensional r.v. $X$  has a law that is rotationally invariant, then the angular component $\frac X{|X|}$ and the radial one $|X|$ are independent. Let us however detail a proof of this fact for thoroughness sake.

Let $A\in\cl B(\mathbb{S}^{d-1})$ and $B\in\cl B(\R^+)$. Then, for every orthogonal matrix $O\in SO(d)$, we have
$$
\P\Bigl(\frac X{|X|}\in A,|X|\in B\Bigr)=\P\Bigl(\frac {OX}{|X|}\in A,|X|\in B\Bigr)
$$
which means that, for every fixed $B\in\cl B(\R^+)$, the finite measure
$$
\cl B(\mathbb{S}^{d-1})\ni A\mapsto \P\Bigl(\frac X{|X|}\in A,|X|\in B\Bigr)
$$
is rotationally invariant, hence a multiple of the normalized Lebesgue measure, $\lambda$ say, of $\mathbb{S}^{d-1}$. The total mass of such a measure is deduced by choosing $A=\mathbb{S}^{d-1}$, i.e. $\P(|X|\in B)$. Therefore
$$
\P\Bigl(\frac X{|X|}\in A,|X|\in B\Bigr)=\lambda(A)\P\bigl(|X|\in B\bigr)=
\P\Bigl(\frac X{|X|}\in A\Bigr)\P\bigl(|X|\in B\bigr)\ .
$$
\FINEDIM
\begin{remark} In general the property of exchangeabilty of the components of a vector $X=^t\!\!(X_1,\dots,X_d)$ is not invariant with respect to rotations, so that {\it a priori} the condition of exchangeability of $X$ depends on the choice of the orthogonal basis that is chosen. However,  as Theorem \ref{Teorema1}, states rotational invariance, it also states that if the coefficients are exchangeable with respect to one basis, then they are also exchangeable with respect to any other basis.
\end{remark}
\begin{remark}  It is well-known that the Lebesgue measure on the sphere $\mathbb{S}^{d-1}$ and its multiples are the unique measures on  $\mathbb{S}^{d-1}$ that are invariant under the action of the rotation group $SO(d)$.

In the same spirit as in the previous section, Proposition \ref{prop11} allows to give a weaker characterization.

In terms of probabilities, if $X=(X_1,\dots,X_d)$ is an $\mathbb{S}^{d-1}$-valued r.v. such that
$\{X_1,\dots,X_d\}$ is exchangeable and whose law, $\mu$ say, is invariant under the action of a subgroup of $SO(d)$ acting irreducibly on $\R^d$, then $\mu$ is the normalized Lebesgue measure on $\mathbb{S}^{d-1}$.
\end{remark}
\section{An extension of Bernstein's theorem} \label{sec-bernstein}
Bernstein's theorem (see \cite{kac-char}, \cite{chaumont-yor} p.74 e.g.) states
that a probability on $\R^d$ which is invariant with respect to the action of
$SO(d)$ and whose components are independent is necessarily Gaussian.
What if we assumed invariance only with respect to the action of a subgroup of $SO(d)$ {\it acting irreducibly} on $\R^d$?
\begin{theorem} \label{bernstein} (Bernstein revisited) Let $X=(X_1,\dots,X_d)$ be a $d$-di\-men\-sio\-nal r.v. such that
\tin{a)} its law is invariant with respect to a subgroup of $SO(d)$ acting irreducibly on $\R^d$;
\tin{b)} the r.v.'s $X_1,\dots, X_d$ have the same law and are independent.

Then $X$ is Gaussian.
\end{theorem}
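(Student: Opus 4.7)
The plan is to reduce the theorem to the classical Bernstein theorem by using Proposition \ref{prop11} to upgrade the invariance hypothesis. The point is that once one has the combined conditions of full rotational invariance and independence of the components, the conclusion is exactly what the classical Bernstein theorem delivers.

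First I would observe that hypothesis (b) implies that the family $\{X_1,\dots,X_d\}$ is exchangeable: indeed, if the $X_i$ are i.i.d.\ then for any permutation $\pi \in S_d$ the joint law of $(X_{\pi(1)},\dots,X_{\pi(d)})$ coincides with that of $(X_1,\dots,X_d)$, since both are product measures with the same marginal. Combined with hypothesis (a), namely invariance under a subgroup of $SO(d)$ acting irreducibly on $\R^d$, we are exactly in the setting of Proposition \ref{prop11}. Applying that proposition we conclude that the law of $X$ is invariant under the action of the full group $SO(d)$.

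At this stage $X=(X_1,\dots,X_d)$ has $d$ independent, identically distributed components and its joint law is rotationally invariant. This is precisely the hypothesis of the classical Bernstein theorem (see \cite{kac-char}, \cite{chaumont-yor} p.~74), which then yields that each $X_i$ is a centered Gaussian and hence that $X$ is Gaussian.

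The main substantive step is therefore the appeal to Proposition \ref{prop11}, which itself relies on the algebraic Theorem \ref{mainref} proved in the previous section; once full rotational invariance is granted, no further probabilistic work is needed beyond citing the classical result. There is no real obstacle in the argument itself, since the only non trivial input, namely the promotion of invariance from an irreducible subgroup to the whole $SO(d)$, has been established beforehand. One could alternatively write out the classical Bernstein argument (via characteristic functions: rotational invariance together with the product structure of the characteristic function forces the Cauchy functional equation on $\log \widehat{\mu}_{X_1}(|\xi|^2)$, whose only continuous solution is $-c|\xi|^2$), but this is not needed here.
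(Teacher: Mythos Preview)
Your proof is correct and follows essentially the same route as the paper: use i.i.d.\ to get exchangeability, invoke the main algebraic result (the paper cites Theorem \ref{mainref} directly, you go through Proposition \ref{prop11}, which is the same thing) to promote the irreducible-subgroup invariance to full $SO(d)$-invariance, and then apply the classical Bernstein theorem.
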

Theorem \ref{bernstein} follows from Theorem \ref{mainref} and the true Bernstein
Theorem: condition b) actually states that $(X_1,\dots,X_d)$ is exchangeable, hence
invariant with respect to the action of the permutation group so that by Theorem
\ref{mainref} $X$ is invariant with respect to the action of the full group $SO(d)$ and
Bernstein's Theorem allows to conclude.

With respect to the original Bernstein Theorem, Theorem \ref{bernstein} requires a stronger distributional
assumption (equality of the laws of the components $X_i$) but a much
weaker invariance hypothesis, as a subgroup of of $SO(d)$ acting irreducibly
on $\R^d$ can be ``very small'': as noted in \S\ref{sec-eig} the group $SO(3)\subset SO(2d+1)$ can act irreducibly on $\R^{2d+1}$ for every $d$.

\begin{remark}
It should be noted that a result analogous to Theorem \ref{bernstein} follows also from the so-called Skitovich-Darmois Theorem (see e.g. \cite{MR0137201}), which is indeed slightly more general, as it does not require the single components to be equi-distributed. However, our purpose here is not to claim new results, as to show what we consider elegant and simple consequences of our main Theorem \ref{mainref}.
\end{remark}
%
%
\bibliography{bibbase}
\bibliographystyle{amsplain}
\end{document}